\documentclass[11pt, reqno]{amsart}
\usepackage{amsmath, amsthm, amscd, amsfonts, amssymb, graphicx, color}
\usepackage[bookmarksnumbered, colorlinks, plainpages]{hyperref}

\setcounter{MaxMatrixCols}{10}

\input{mathrsfs.sty}
\textheight 22.5truecm \textwidth 15.5truecm
\setlength{\oddsidemargin}{0.35in}\setlength{\evensidemargin}{0.35in}
\setlength{\topmargin}{-.5cm}

\newtheorem{theorem}{Theorem}[section]
\newtheorem{lemma}[theorem]{Lemma}

\newtheorem{corollary}[theorem]{Corollary}
\theoremstyle{definition}

\theoremstyle{remark}
\newtheorem{remark}[theorem]{Remark}
\numberwithin{equation}{section}

\begin{document}
\title[Numerical radius inequalities involving commutators]{Numerical radius
inequalities involving commutators of $G_{1}$ operators}
\author[M. Bakherad and F. Kittaneh ]{Mojtaba Bakherad$^1$ and Fuad Kittaneh$%
^2$}
\address{$^1$Department of Mathematics, Faculty of Mathematics, University
of Sistan and Baluchestan, Zahedan, I.R.Iran.}
\email{mojtaba.bakherad@yahoo.com; bakherad@member.ams.org}
\address{$^2$Department of Mathematics, The University of Jordan, Amman,
Jordan}
\email{fkitt@ju.edu.jo}
\subjclass[2010]{Primary: 47A12, Secondary: 15A60, 30E20, 47A30, 47B15,
47B20.}
\keywords{$G_{1}$ operator, numerical radius, commutator, analytic function.}

\begin{abstract}
We prove numerical radius inequalities involving commutators of $G_{1}$
operators and certain analytic functions. Among other inequalities, it is
shown that if $A$ and $X$ are bounded linear operators on a complex Hilbert
space, then
\begin{equation*}
w(f(A)X+X\bar{f}(A))\leq {\frac{2}{d_{A}^{2}}}w(X-AXA^{\ast }),
\end{equation*}%
where $A$ is a $G_{1}$ operator with $\sigma (A)\subset \mathbb{D}$ and $f$
is analytic on the unit disk $\mathbb{D}$ such that \textrm{{Re}$(f)>0$ and $%
f(0)=1$. }
\end{abstract}

\maketitle



\section{Introduction}

Let $({\mathscr H},\langle \,\cdot \,,\,\cdot \,\rangle )$ be a complex
Hilbert space and ${\mathbb{B}}(\mathscr H)$ denote the $C^{\ast }$-algebra
of all bounded linear operators on ${\mathscr H}$ with the identity $I$. In
the case when {dim}${\mathscr H}=n$, we identify ${\mathbb{B}}({\mathscr H})$
with the matrix algebra $\mathbb{M}_{n}$ of all $n\times n$ matrices having
entries in the complex field. The numerical radius of $A\in {\mathbb{B}}({%
\mathscr H})$ is defined by
\begin{equation*}
w(A):=\sup \Big\{|\langle Ax,x\rangle |:x\in {\mathscr H},\parallel
x\parallel =1\Big\}.
\end{equation*}%
It is well known that $w(\,\cdot \,)$ defines a norm on ${\mathbb{B}}({%
\mathscr H})$, which is equivalent to the usual operator norm $\Vert \,\cdot
\,\Vert $. In fact, for any $A\in {\mathbb{B}}({\mathscr H})$, $\frac{1}{2}%
\Vert A\Vert \leq w(A)\leq \Vert A\Vert $ (see \cite[p. 9]{gof}). If $%
A^{2}=0 $, then equality holds in the first inequality, and if $A$ is
normal, then equality holds in the second inequality. For further
information about numerical radius inequalities, we refer the reader to \cite%
{aA, AK1, AK2, Kitt1, sheikh, YAM} and references therein.

An operator $A\in {\mathbb{B}}({\mathscr H})$ is called a $G_{1}$ operator
if the growth condition
\begin{equation*}
\Vert (z-A)^{-1}\Vert ={\frac{1}{\text{dist}(z,\sigma (A))}}
\end{equation*}%
holds for all $z$ not in the spectrum $\sigma (A)$ of $A$, where $\text{dist}%
(z,\sigma (A))$ denotes the distance between $z$ and $\sigma (A)$. For
simplicity, if $z$ is a complex number, we write $z$ instead of $zI$. It is
known that hyponormal (in particular, normal) operators are $G_{1}$ operators
(see, e.g., \cite{put}). Let $A\in {\mathbb{B}}({\mathscr H})$ and $f$ be a
function which is analytic on an open neighborhood $\Omega $ of $\sigma (A)$
in the complex plane. Then $f(A)$ denotes the operator defined on ${\mathscr %
H}$ by the Riesz-Dunford integral as
\begin{equation*}
f(A)={\frac{1}{2\pi i}}\int_{C}f(z)(z-A)^{-1}dz,
\end{equation*}%
where $C$ is a positively oriented simple closed rectifiable contour
surrounding $\sigma (A)$ in $\Omega $ (see e.g., \cite[p. 568]{du}). The
spectral mapping theorem asserts that $\sigma (f(A))=f(\sigma (A))$.
Throughout this note, $\mathbb{D}=\left\{ z\in \mathbb{C}:|z|<1\right\} $
denotes the unit disk, $\partial \mathbb{D}$ stands for the boundary of $%
\mathbb{D}$ and $d_{A}=\text{dist}(\partial \mathbb{D},\sigma (A))$. In
addition, we denote
\begin{equation*}
\mathfrak{A}=\left\{ f:\mathbb{D}\rightarrow \mathbb{C}:f\,\text{is analytic}%
,\text{Re}(f)>0\,\text{and}\,f(0)=1\right\} .
\end{equation*}%
The Sylvester type equations $AXB\pm X=C$ have been investigated in matrix
theory (see \cite{BL}). Several perturbation bounds for the norms of sums or
differences of operators have been presented in the literature by employing
some integral representations of certain functions. See \cite{Bhatia, kit2,
mosl} and references therein.

In this paper, we present some upper bounds for the numerical radii of the
commutators and elementary operators of the form $f(A)X\pm X\bar{f}(A)$, $%
f(A)X\bar{f}(B)-f(B)X\bar{f}(A)$ and $f(A)X\bar{f}(B)+2X+f(B)X\bar{f}(A)$,
where $A,B,X\in {\mathbb{B}}({\mathscr H})$ and $f\in \mathfrak{A}$.

\section{main results}

To prove our first result, the following lemma concerning numerical radius
inequalities and an equality is required.

\begin{lemma}
\cite{HKSH, HKSH2}\label{kho} Let $A, B, X, Y\in{\mathbb{B}}({\mathscr H})$.
Then\newline

$(a)\,\,w(A^{\ast }XA)\leq \Vert A\Vert ^{2}w(X).$ \newline

$(b)\,\,w\left( AX\pm XA^{\ast }\right) \leq 2\Vert A\Vert w(X).$ \newline

$(c)\,\,w\left( A^{\ast }XB\pm B^{\ast }YA\right) \leq 2\Vert A\Vert \Vert
B\Vert \,w\left( \left[
\begin{array}{cc}
0 & X \\
Y & 0%
\end{array}%
\right] \right) .$\newline

$(d)\,\,w\left( \left[
\begin{array}{cc}
0 & AXB^{\ast } \\
BYA^{\ast } & 0%
\end{array}%
\right] \right) \leq \max \{||A||^{2},||B||^{2}\}w\left( \left[
\begin{array}{cc}
0 & X \\
Y & 0%
\end{array}%
\right] \right) .$ \newline

$(e)\,\,w\left( \left[
\begin{array}{cc}
0 & X \\
Y & 0%
\end{array}%
\right] \right) \leq {\frac{w(X+Y)+w(X-Y)}{2}}.$ \newline

$(f)$ \ $w\left( \left[
\begin{array}{cc}
0 & X \\
e^{i\theta }X & 0%
\end{array}%
\right] \right) =w(X)$ for $\theta \in
\mathbb{R}
$.
\end{lemma}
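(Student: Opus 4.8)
The plan is to view all six parts as consequences of two principles---invariance of $w(\cdot)$ under unitary conjugation and under multiplication by a unimodular scalar, together with the direct estimate in $(a)$---and to establish them in the order $(a),(f),(e),(d),(c),(b)$. For $(a)$ I would argue straight from the definition: given a unit vector $x$, set $y=Ax$; if $y=0$ there is nothing to prove, and otherwise $|\langle A^{\ast}XAx,x\rangle|=|\langle Xy,y\rangle|=\|y\|^{2}\,|\langle X(y/\|y\|),y/\|y\|\rangle|\le\|Ax\|^{2}w(X)\le\|A\|^{2}w(X)$, so taking the supremum over $x$ finishes it. For $(f)$ I would conjugate $\left[\begin{smallmatrix}0&X\\ e^{i\theta}X&0\end{smallmatrix}\right]$ by the diagonal unitary $\operatorname{diag}(e^{i\theta/2}I,I)$ to reduce it, up to the unimodular factor $e^{i\theta/2}$, to $\left[\begin{smallmatrix}0&X\\ X&0\end{smallmatrix}\right]$; the latter is unitarily equivalent to $X\oplus(-X)$ via $\tfrac{1}{\sqrt2}\left[\begin{smallmatrix}I&I\\ I&-I\end{smallmatrix}\right]$, and since the numerical radius of a direct sum is the larger of the two summands, its value is $w(X)$. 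Then $(e)$ follows from the identity $\left[\begin{smallmatrix}0&X\\ Y&0\end{smallmatrix}\right]=\tfrac12\left[\begin{smallmatrix}0&X+Y\\ X+Y&0\end{smallmatrix}\right]+\tfrac12\left[\begin{smallmatrix}0&X-Y\\ -(X-Y)&0\end{smallmatrix}\right]$ combined with the triangle inequality for $w$ and two applications of $(f)$ (at $\theta=0$ and $\theta=\pi$).

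For $(d)$ I would use the factorization $\left[\begin{smallmatrix}0&AXB^{\ast}\\ BYA^{\ast}&0\end{smallmatrix}\right]=S\left[\begin{smallmatrix}0&X\\ Y&0\end{smallmatrix}\right]S^{\ast}$ with $S=\operatorname{diag}(A,B)$ and apply $(a)$ to $S^{\ast}$, which gives the bound $\|S\|^{2}w\!\left(\left[\begin{smallmatrix}0&X\\ Y&0\end{smallmatrix}\right]\right)=\max\{\|A\|^{2},\|B\|^{2}\}\,w\!\left(\left[\begin{smallmatrix}0&X\\ Y&0\end{smallmatrix}\right]\right)$. Part $(c)$ carries the real difficulty, because the asserted constant $2\|A\|\|B\|$ is sharper than the $\|A\|^{2}+\|B\|^{2}$ that a one-shot estimate delivers; the remedy is a scaling optimization. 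For a unit vector $x$ and a parameter $t>0$ I would set $\xi_{t}=\left[\begin{smallmatrix}tAx\\ t^{-1}Bx\end{smallmatrix}\right]$ and verify the identity $\langle(A^{\ast}XB+B^{\ast}YA)x,x\rangle=\big\langle\left[\begin{smallmatrix}0&X\\ Y&0\end{smallmatrix}\right]\xi_{t},\xi_{t}\big\rangle$, in which all powers of $t$ cancel. This yields $|\langle(A^{\ast}XB+B^{\ast}YA)x,x\rangle|\le w\!\left(\left[\begin{smallmatrix}0&X\\ Y&0\end{smallmatrix}\right]\right)\big(t^{2}\|A\|^{2}+t^{-2}\|B\|^{2}\big)$; taking the supremum over $x$ and then minimizing the right-hand side over $t$---the minimum $2\|A\|\|B\|$ occurring at $t^{2}=\|B\|/\|A\|$---settles the plus-sign case. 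The minus-sign case reduces to it by replacing $Y$ with $-Y$ and noting, via conjugation by $\operatorname{diag}(iI,I)$ together with a unimodular factor, that $w\!\left(\left[\begin{smallmatrix}0&X\\ -Y&0\end{smallmatrix}\right]\right)=w\!\left(\left[\begin{smallmatrix}0&X\\ Y&0\end{smallmatrix}\right]\right)$.

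Finally, $(b)$ emerges as a specialization of $(c)$: the substitution $A\mapsto I$, $B\mapsto A^{\ast}$, $X\mapsto X$, $Y\mapsto X$ turns $A^{\ast}XB\pm B^{\ast}YA$ into $AX\pm XA^{\ast}$ up to a sign that $w$ ignores, and the bound into $2\|A\|\,w\!\left(\left[\begin{smallmatrix}0&X\\ X&0\end{smallmatrix}\right]\right)=2\|A\|w(X)$ by $(f)$ at $\theta=0$. I expect the sole genuine obstacle to be securing the sharp constant in $(c)$; the scaling identity for $\xi_{t}$ is the crux, while every remaining step amounts to selecting the appropriate unitary or factorization and invoking $(a)$ or $(f)$.
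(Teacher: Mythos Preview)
Your argument is correct in all six parts. The paper, however, does not prove $(a)$, $(b)$, $(c)$, $(e)$, or $(f)$ at all: it simply cites the references \cite{HKSH, HKSH2} for those and gives a proof only of $(d)$. For that one part your approach coincides exactly with the paper's---you both factor $\left[\begin{smallmatrix}0&AXB^{\ast}\\ BYA^{\ast}&0\end{smallmatrix}\right]=C\left[\begin{smallmatrix}0&X\\ Y&0\end{smallmatrix}\right]C^{\ast}$ with $C=\operatorname{diag}(A,B)$ and invoke $(a)$ together with $\|C\|=\max\{\|A\|,\|B\|\}$.

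What you add beyond the paper is a self-contained treatment of the remaining items. Your proof of $(f)$ via the diagonal unitary $\operatorname{diag}(e^{i\theta/2}I,I)$ and the Hadamard-type unitary diagonalizing $\left[\begin{smallmatrix}0&X\\ X&0\end{smallmatrix}\right]$ is clean, and your derivation of $(e)$ from $(f)$ by the splitting into symmetric and antisymmetric off-diagonal blocks is standard and correct. The scaling trick for $(c)$---introducing $\xi_{t}=\left[\begin{smallmatrix}tAx\\ t^{-1}Bx\end{smallmatrix}\right]$ so that the cross terms are $t$-independent and then optimizing $t^{2}\|A\|^{2}+t^{-2}\|B\|^{2}$---is exactly the device used in the cited literature to obtain the sharp constant $2\|A\|\|B\|$, and your reduction of the minus sign and of $(b)$ to the plus-sign case of $(c)$ is sound. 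One cosmetic point: in $(c)$ you pass directly from $\|\xi_{t}\|^{2}=t^{2}\|Ax\|^{2}+t^{-2}\|Bx\|^{2}$ to $t^{2}\|A\|^{2}+t^{-2}\|B\|^{2}$; this is of course immediate since $\|x\|=1$, but it is worth making explicit, and the degenerate case $\|A\|=0$ or $\|B\|=0$ (where the optimizing $t$ is undefined) is trivial.
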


\begin{proof}
Since all parts, except part (d), have bee shown in \cite{HKSH, HKSH2}, we
prove only part (d). If we take $C=\left[
\begin{array}{cc}
A & 0 \\
0 & B%
\end{array}%
\right] $ and $S=\left[
\begin{array}{cc}
0 & X \\
Y & 0%
\end{array}%
\right] $, then $CSC^{\ast }=\left[
\begin{array}{cc}
0 & AXB^{\ast } \\
BYA^{\ast } & 0%
\end{array}%
\right] $. Now, using part (a), we have
\begin{align*}
w\left( \left[
\begin{array}{cc}
0 & AXB^{\ast } \\
BYA^{\ast } & 0%
\end{array}%
\right] \right) & =w(CSC^{\ast }) \\
& \leq \Vert C\Vert ^{2}w(S) \\
& =\max \{||A||^{2},||B||^{2}\}w\left( \left[
\begin{array}{cc}
0 & X \\
Y & 0%
\end{array}%
\right] \right) \text{,}
\end{align*}%
as required.
\end{proof}

Now, we are in position to demonstrate the main results of this section by
using some ideas from \cite{kit2, mosl}.

\begin{theorem}
\label{kit} Let $A\in {\mathbb{B}}({\mathscr H})$ be a $G_{1}$ operator with
$\sigma (A)\subset \mathbb{D}$ and $f\in \mathfrak{A}$. Then for every $X\in
{\mathbb{B}}({\mathscr H})$, we have%
\begin{equation*}
w(f(A)X+X\bar{f}(A))\leq {\frac{2}{d_{A}^{2}}}w(X-AXA^{\ast })
\end{equation*}%
and
\begin{equation*}
w(f(A)X-X\bar{f}(A))\leq {\frac{4}{d_{A}^{2}}}\Vert A\Vert w(X).
\end{equation*}
\end{theorem}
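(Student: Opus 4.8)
The plan is to combine the Herglotz (Riesz--Herglotz) integral representation of functions in $\mathfrak A$ with the resolvent calculus and Lemma~\ref{kho}. Since $f$ has positive real part on $\mathbb D$ and $f(0)=1$, there is a probability Borel measure $\mu$ on $\partial\mathbb D$ with
\[
f(z)=\int_{\partial\mathbb D}\frac{\lambda+z}{\lambda-z}\,d\mu(\lambda),\qquad z\in\mathbb D .
\]
Because $\sigma(A)\subset\mathbb D$, for each $\lambda\in\partial\mathbb D$ the kernel $z\mapsto\frac{\lambda+z}{\lambda-z}$ is analytic on a neighbourhood of $\sigma(A)$, so choosing the Riesz--Dunford contour inside $\mathbb D$ (hence at positive distance from $\partial\mathbb D$) and interchanging it with the $\mu$-integral by Fubini gives
\[
f(A)=\int_{\partial\mathbb D}(\lambda+A)(\lambda-A)^{-1}\,d\mu(\lambda)=2\int_{\partial\mathbb D}\lambda(\lambda-A)^{-1}\,d\mu(\lambda)-I .
\]
In particular $f(A)-I=2\int_{\partial\mathbb D}A(\lambda-A)^{-1}\,d\mu(\lambda)$, and the corresponding representation of $\bar f(A)=f(A)^{*}$ reads $\bar f(A)=2\int_{\partial\mathbb D}\bar\lambda(\bar\lambda-A^{*})^{-1}\,d\mu(\lambda)-I$. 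Throughout I will use the elementary fact that $w\bigl(\int_{\partial\mathbb D}T_\lambda\,d\mu(\lambda)\bigr)\le\int_{\partial\mathbb D}w(T_\lambda)\,d\mu(\lambda)$, immediate from the definition of $w$ since $\mu\ge 0$.

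The heart of the first inequality is an operator identity. Put $R_\lambda=(\lambda-A)^{-1}$ and use $R_\lambda A=\lambda R_\lambda-I$, $A^{*}R_\lambda^{*}=\bar\lambda R_\lambda^{*}-I$ together with $|\lambda|^{2}=1$ to get, for every $\lambda\in\partial\mathbb D$,
\[
R_\lambda\,(X-AXA^{*})\,R_\lambda^{*}=\lambda R_\lambda X+\bar\lambda XR_\lambda^{*}-X .
\]
Integrating this against $\mu$, using $\mu(\partial\mathbb D)=1$ and the representations above, yields
\[
f(A)X+X\bar f(A)=2\int_{\partial\mathbb D}(\lambda-A)^{-1}\,(X-AXA^{*})\,\bigl((\lambda-A)^{-1}\bigr)^{*}\,d\mu(\lambda).
\]
Now apply $w(\cdot)$, then Lemma~\ref{kho}(a) with the operator $\bigl((\lambda-A)^{-1}\bigr)^{*}$ to bound each integrand by $\|(\lambda-A)^{-1}\|^{2}\,w(X-AXA^{*})$, and finally invoke the $G_{1}$ condition: for $\lambda\in\partial\mathbb D$, $\|(\lambda-A)^{-1}\|=\text{dist}(\lambda,\sigma(A))^{-1}\le d_{A}^{-1}$. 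Since $\mu$ is a probability measure this gives $w(f(A)X+X\bar f(A))\le\frac{2}{d_{A}^{2}}w(X-AXA^{*})$.

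For the second inequality the same representation is used, but the difference telescopes: as $I^{*}=I$,
\[
f(A)X-X\bar f(A)=(f(A)-I)X-X(f(A)-I)^{*},
\]
so Lemma~\ref{kho}(b) applied to the operator $f(A)-I$ gives $w(f(A)X-X\bar f(A))\le 2\,\|f(A)-I\|\,w(X)$. Combining $f(A)-I=2\int_{\partial\mathbb D}A(\lambda-A)^{-1}\,d\mu(\lambda)$ with the $G_{1}$ estimate and $\mu(\partial\mathbb D)=1$ yields $\|f(A)-I\|\le 2\|A\|\,d_{A}^{-1}\le 2\|A\|\,d_{A}^{-2}$ (here $d_{A}\le 1$ because $\sigma(A)\subset\mathbb D$), and hence $w(f(A)X-X\bar f(A))\le\frac{4}{d_{A}^{2}}\|A\|\,w(X)$.

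The step I expect to be the main obstacle is proving (and motivating) the identity $R_\lambda(X-AXA^{*})R_\lambda^{*}=\lambda R_\lambda X+\bar\lambda XR_\lambda^{*}-X$: this is exactly where the structure $X-AXA^{*}$ on the right-hand side comes from, and where the normalisation $|\lambda|=1$ (i.e.\ integrating over $\partial\mathbb D$, legitimate thanks to $\sigma(A)\subset\mathbb D$) is essential. Once this and the Herglotz representation are in place, the remaining work is just interchanging integrals, bounding $\|(\lambda-A)^{-1}\|$ via the $G_{1}$ growth condition, and quoting parts (a) and (b) of Lemma~\ref{kho}.
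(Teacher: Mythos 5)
Your argument is correct. For the first inequality you follow essentially the paper's route: the Herglotz representation of $f$ with a probability measure, the identity $f(A)X+X\bar f(A)=2\int_{\partial\mathbb D}(\lambda-A)^{-1}(X-AXA^{*})\bigl((\lambda-A)^{-1}\bigr)^{*}d\mu(\lambda)$ (which the paper obtains by the equivalent algebraic manipulation $(\lambda+A)X(\bar\lambda-A^{*})+(\lambda-A)X(\bar\lambda+A^{*})=2(X-AXA^{*})$ for $|\lambda|=1$, whereas you expand $R_\lambda(X-AXA^{*})R_\lambda^{*}=\lambda R_\lambda X+\bar\lambda XR_\lambda^{*}-X$ and integrate), then Lemma~\ref{kho}(a) and the $G_{1}$ growth condition on $\partial\mathbb D$. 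For the second inequality you genuinely diverge: the paper derives a second integral identity, $f(A)X-X\bar f(A)=2\int(\lambda-A)^{-1}\bigl((\bar\lambda A)X-X(\bar\lambda A)^{*}\bigr)\bigl((\lambda-A)^{-1}\bigr)^{*}d\mu(\lambda)$, and applies Lemma~\ref{kho}(a) and (b) inside the integral, which is where both factors of $d_{A}^{-1}$ arise. You instead telescope, $f(A)X-X\bar f(A)=(f(A)-I)X-X(f(A)-I)^{*}$, apply Lemma~\ref{kho}(b) once to the operator $f(A)-I$, and bound $\|f(A)-I\|\le 2\|A\|/d_{A}$ from $f(A)-I=2\int A(\lambda-A)^{-1}d\mu(\lambda)$. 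This is shorter and in fact yields the sharper bound $w(f(A)X-X\bar f(A))\le\frac{4}{d_{A}}\|A\|\,w(X)$, which you then correctly weaken to the stated $\frac{4}{d_{A}^{2}}\|A\|\,w(X)$ via $d_{A}\le 1$ (valid since $\sigma(A)$ is a nonempty subset of $\mathbb D$). The auxiliary steps --- interchanging the Riesz--Dunford and Herglotz integrals, the resolvent identity $R_\lambda A=\lambda R_\lambda-I$, and the estimate $\|(\lambda-A)^{-1}\|\le d_{A}^{-1}$ for $\lambda\in\partial\mathbb D$ --- all check out.
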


\begin{proof}
Using the Herglotz representation theorem (see e.g., \cite[p. 21]{do}), we
have%
\begin{equation*}
f(z)=\int_{0}^{2\pi }{\frac{e^{i\alpha }+z}{e^{i\alpha }-z}}d\mu (\alpha )+i%
\text{Im}\,f(0)=\int_{0}^{2\pi }{\frac{e^{i\alpha }+z}{e^{i\alpha }-z}}d\mu
(\alpha ),
\end{equation*}%
where $\mu $ is a positive Borel measure on the interval $[0,2\pi ]$ with
finite total mass $\int_{0}^{2\pi }d\mu (\alpha )=f(0)=1$. Hence,%
\begin{equation*}
\bar{f}({z})=\overline{{\int_{0}^{2\pi }{\frac{e^{i\alpha }+{z}}{e^{i\alpha
}-{z}}}d\mu (\alpha )}}=\int_{0}^{2\pi }{\frac{e^{-i\alpha }+\bar{z}}{%
e^{-i\alpha }-\bar{z}}}d\mu (\alpha ),
\end{equation*}%
where $\bar{f}$ is the conjugate function of $f$. So,%
\begin{align*}
f(A)X+X\bar{f}(A)& =\int_{0}^{2\pi }\left[ \left( e^{i\alpha }+A\right)
\left( e^{i\alpha }-A\right) ^{-1}X+X\left( e^{-i\alpha }+A^{\ast }\right)
\left( e^{-i\alpha }-A^{\ast }\right) ^{-1}\right] d\mu (\alpha ) \\
& =\int_{0}^{2\pi }\left( e^{i\alpha }-A\right) ^{-1}\Big[\left( e^{i\alpha
}+A\right) X\left( e^{-i\alpha }-A^{\ast }\right) \\
& \qquad \quad +\left( e^{i\alpha }-A\right) X\left( e^{-i\alpha }+A^{\ast
}\right) \Big]\left( e^{-i\alpha }-A^{\ast }\right) ^{-1}d\mu (\alpha ) \\
& =2\int_{0}^{2\pi }\left( e^{i\alpha }-A\right) ^{-1}(X-AXA^{\ast })\left(
e^{-i\alpha }-A^{\ast }\right) ^{-1}d\mu (\alpha ).
\end{align*}%
Hence,%
\begin{align*}
w(f(A)X& +X\bar{f}(A)) \\
& =w\left( \int_{0}^{2\pi }\left[ \left( e^{i\alpha }+A\right) \left(
e^{i\alpha }-A\right) ^{-1}X+X\left( e^{-i\alpha }+A^{\ast }\right) \left(
e^{-i\alpha }-A^{\ast }\right) ^{-1}\right] d\mu (\alpha )\right) \\
& =2\,w\left( \int_{0}^{2\pi }\left( e^{i\alpha }-A\right) ^{-1}(X-AXA^{\ast
})\left( e^{-i\alpha }-A^{\ast }\right) ^{-1}d\mu (\alpha )\right) \\
& \leq 2\int_{0}^{2\pi }w\left( \left( e^{i\alpha }-A\right)
^{-1}(X-AXA^{\ast })\left( e^{-i\alpha }-A^{\ast }\right) ^{-1}\right) d\mu
(\alpha ) \\
& \qquad \qquad \qquad \qquad \qquad \qquad (\text{since}\,w(\,\cdot \,)\,%
\text{is a norm}) \\
& \leq 2\int_{0}^{2\pi }\Vert \left( e^{i\alpha }-A\right) ^{-1}\Vert
^{2}w\left( X-AXA^{\ast }\right) d\mu (\alpha ) \\
& \qquad \qquad \qquad \qquad \qquad \qquad (\text{by Lemma}\,\ref{kho}(a)).
\end{align*}%
Since $A$ is a $G_{1}$ operator, it follows that%
\begin{equation*}
\left\Vert \left( e^{i\alpha }-A\right) ^{-1}\right\Vert ={\frac{1}{\text{%
dist}(e^{i\alpha },\sigma (A))}}\leq {\frac{1}{\text{dist}(\partial \mathbb{D%
},\sigma (A))}}={\frac{1}{d_{A}},}
\end{equation*}%
and so%
\begin{align*}
w\left( f(A)X+X\bar{f}(A)\right) & \leq \left( {\frac{2}{d_{A}^{2}}}%
\int_{0}^{2\pi }d\mu (\alpha )\right) w(X-AXA^{\ast }) \\
& =\left( {\frac{2}{d_{A}^{2}}}f(0)\right) w(X-AXA^{\ast }) \\
& ={\frac{2}{d_{A}^{2}}}w(X-AXA^{\ast }).
\end{align*}%
This proves the first inequality.

Similarly, it follows from the equations%
\begin{align*}
f(A)X-& X\bar{f}(A) \\
& =\int_{0}^{2\pi }\left[ \left( e^{i\alpha }+A\right) \left( e^{i\alpha
}-A\right) ^{-1}X-X\left( e^{-i\alpha }+A^{\ast }\right) \left( e^{-i\alpha
}-A^{\ast }\right) ^{-1}\right] d\mu (\alpha ) \\
& =\int_{0}^{2\pi }\left( e^{i\alpha }-A\right) ^{-1}\Big[\left( e^{i\alpha
}+A\right) X\left( e^{-i\alpha }-A^{\ast }\right) \\
& \qquad \quad -\left( e^{i\alpha }-A\right) X\left( e^{-i\alpha }+A^{\ast
}\right) \Big]\left( e^{-i\alpha }-A^{\ast }\right) ^{-1}d\mu (\alpha ) \\
& =2\int_{0}^{2\pi }\left( e^{i\alpha }-A\right) ^{-1}(e^{-i\alpha
}AX-e^{i\alpha }XA^{\ast })\left( e^{-i\alpha }-A^{\ast }\right) ^{-1}d\mu
(\alpha ) \\
& =2\int_{0}^{2\pi }\left( e^{i\alpha }-A\right) ^{-1}\left( \left(
e^{-i\alpha }A\right) X-X\left( e^{-i\alpha }A\right) ^{\ast }\right) \left(
e^{-i\alpha }-A^{\ast }\right) ^{-1}d\mu (\alpha )
\end{align*}%
that
\begin{align*}
w(& f(A)X-X\bar{f}(A)) \\
& =2w\left( \int_{0}^{2\pi }\left( e^{i\alpha }-A\right) ^{-1}\left( \left(
e^{-i\alpha }A\right) X-X\left( e^{-i\alpha }A\right) ^{\ast }\right) \left(
e^{-i\alpha }-A^{\ast }\right) ^{-1}d\mu (\alpha )\right) \\
& \leq 2\int_{0}^{2\pi }w\left( \left( e^{i\alpha }-A\right) ^{-1}\left(
\left( e^{-i\alpha }A\right) X-X\left( e^{-i\alpha }A\right) ^{\ast }\right)
\left( e^{-i\alpha }-A^{\ast }\right) ^{-1}\right) d\mu (\alpha ) \\
& \qquad \qquad \qquad \qquad \qquad \qquad (\text{since}\,w(\,\cdot \,)\,%
\text{is a norm}) \\
& \leq 2\int_{0}^{2\pi }\left\Vert \left( e^{i\alpha }-A\right)
^{-1}\right\Vert ^{2}w\left( \left( e^{-i\alpha }A\right) X-X\left(
e^{-i\alpha }A\right) ^{\ast }\right) d\mu (\alpha ) \\
& \qquad \qquad \qquad \qquad \qquad \qquad (\text{by Lemma\thinspace \ref%
{kho} (a)}) \\
& \leq 4\int_{0}^{2\pi }\left\Vert \left( e^{i\alpha }-A\right)
^{-1}\right\Vert ^{2}\Vert e^{-i\alpha }A\Vert w(X)d\mu (\alpha ) \\
& \qquad \qquad \qquad \qquad \qquad \qquad (\text{by Lemma\thinspace \ref%
{kho} (b)}) \\
& \leq \frac{4}{d_{A}^{2}}\Vert A\Vert w(X)\int_{0}^{2\pi }d\mu (\alpha ) \\
& \leq \frac{4}{d_{A}^{2}}\Vert A\Vert w(X).
\end{align*}%
This proves the second inequality and completes the proof of the theorem.
\end{proof}

If we take $X=I$ in Theorem \ref{kit}, we get the following result. Observe
that $\bar{f}(A)=\left( f(A)\right) ^{\ast }$.

\begin{corollary}
Let $A\in{\mathbb{B}}({\mathscr H})$ be a $G_1$ operator with $\sigma
(A)\subset\mathbb{D}$ and $f\in\mathfrak{A}$. Then
\begin{align*}
\|\text{Re}(f(A))\|\leq{\frac{1}{d^2_A}}\|I-AA^*\|
\end{align*}
and
\begin{align*}
\|\text{Im} (f(A))\|\leq{\frac{2}{d^2_A}}\|A\|.
\end{align*}
\end{corollary}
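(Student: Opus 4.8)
The plan is to specialize Theorem \ref{kit} to the case $X = I$ and then convert numerical radii into operator norms using the self-adjointness of the real and imaginary parts of $f(A)$. First I would record the algebraic identities that make the choice $X=I$ useful. Since $\bar f(A) = (f(A))^{\ast}$, we have $f(A)I + I\bar f(A) = f(A) + (f(A))^{\ast} = 2\,\text{Re}(f(A))$ and $f(A)I - I\bar f(A) = f(A) - (f(A))^{\ast} = 2i\,\text{Im}(f(A))$, while $X - AXA^{\ast} = I - AA^{\ast}$. Substituting into the two inequalities of Theorem \ref{kit} gives $w\big(2\,\text{Re}(f(A))\big) \le \tfrac{2}{d_A^2}\, w(I - AA^{\ast})$ and $w\big(2i\,\text{Im}(f(A))\big) \le \tfrac{4}{d_A^2}\|A\|\, w(I)$.

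Next I would invoke two elementary properties of the numerical radius: it is a norm, hence $w(cT) = |c|\, w(T)$ for every scalar $c \in \mathbb{C}$; and for a self-adjoint (indeed normal) operator $T$ one has $w(T) = \|T\|$, since equality holds in the bound $w(T) \le \|T\|$ for normal operators. Because $\text{Re}(f(A))$, $\text{Im}(f(A))$ and $I - AA^{\ast}$ are all self-adjoint, and $w(I) = 1$, the first displayed inequality reduces to $2\,\|\text{Re}(f(A))\| \le \tfrac{2}{d_A^2}\|I - AA^{\ast}\|$ and the second to $2\,\|\text{Im}(f(A))\| \le \tfrac{4}{d_A^2}\|A\|$. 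Dividing each by $2$ yields precisely the two asserted inequalities.

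There is essentially no hard step here: the corollary is a direct specialization of Theorem \ref{kit}, and the only point requiring a moment's care is the passage from numerical radius to operator norm, which is legitimate exactly because the substitution $X = I$ produces self-adjoint combinations of $f(A)$ (recall $\text{Re}(f(A)) = \tfrac12\big(f(A)+f(A)^{\ast}\big)$ and $\text{Im}(f(A)) = \tfrac{1}{2i}\big(f(A)-f(A)^{\ast}\big)$). One might also observe in passing that $d_A > 0$, since $\sigma(A)$ is a compact subset of $\mathbb{D}$ and hence stays at positive distance from $\partial\mathbb{D}$, so both right-hand sides are finite.
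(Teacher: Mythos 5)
Your proposal is correct and is precisely the paper's intended argument: the paper proves the corollary simply by setting $X=I$ in Theorem \ref{kit}, noting $\bar{f}(A)=(f(A))^{\ast}$, and using that the numerical radius coincides with the operator norm on the resulting self-adjoint operators $2\,\mathrm{Re}(f(A))$, $2i\,\mathrm{Im}(f(A))$, and $I-AA^{\ast}$. Your additional remarks (homogeneity of $w$, $w(I)=1$, positivity of $d_A$) are accurate and only make explicit what the paper leaves implicit.
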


\begin{theorem}
\label{++} Let $A,B\in {\mathbb{B}}({\mathscr H})$ be $G_{1}$ operators with
$\sigma (A)\cup \sigma (B)\subset \mathbb{D}$ and $f\in \mathfrak{A}$. Then
for every $X\in {\mathbb{B}}({\mathscr H})$, we have%
\begin{align*}
w(f(A)X\bar{f}(B)& -f(B)X\bar{f}(A)) \\
& \leq {\frac{2}{d_{A}d_{B}}}\,\left[ 2w\left( X\right) +w\left( AXB^{\ast
}+BXA^{\ast }\right) +w\left( AXB^{\ast }-BXA^{\ast }\right) \right]
\end{align*}%
and
\begin{align*}
w(f(A)X\bar{f}(B)& +2X+f(B)X\bar{f}(A)) \\
& \leq {\frac{2}{d_{A}d_{B}}}\,\left[ 2w\left( X\right) +w\left( AXB^{\ast
}+BXA^{\ast }\right) +w\left( AXB^{\ast }-BXA^{\ast }\right) \right] .
\end{align*}
\end{theorem}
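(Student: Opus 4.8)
The plan is to imitate the proof of Theorem~\ref{kit}, but now using the Herglotz representations of $f$ at both $A$ and $B$ simultaneously, and then to package the four-term products through the $2\times 2$ block machinery of Lemma~\ref{kho}(c)--(e). First I would write $f(z)=\int_0^{2\pi}\frac{e^{i\alpha}+z}{e^{i\alpha}-z}\,d\mu(\alpha)$ and the analogous conjugate formula for $\bar f$, so that $f(A)=\int_0^{2\pi}(e^{i\alpha}+A)(e^{i\alpha}-A)^{-1}\,d\mu(\alpha)$ and $\bar f(B)=\int_0^{2\pi}(e^{-i\alpha}+B^{\ast})(e^{-i\alpha}-B^{\ast})^{-1}\,d\mu(\alpha)$. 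Substituting into $f(A)X\bar f(B)$ produces a double integral $\int\!\!\int$ over $(\alpha,\beta)$ of $(e^{i\alpha}+A)(e^{i\alpha}-A)^{-1}X(e^{-i\beta}+B^{\ast})(e^{-i\beta}-B^{\ast})^{-1}$, and similarly for $f(B)X\bar f(A)$.

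Next I would factor out the resolvents on the outside, exactly as in Theorem~\ref{kit}: write each summand as $(e^{i\alpha}-A)^{-1}\big[\,\cdots\,\big](e^{-i\beta}-B^{\ast})^{-1}$, where the bracket is a combination of $(e^{i\alpha}\pm A)X(e^{-i\beta}\pm B^{\ast})$. The key algebraic step is to expand these brackets and collect terms: for the difference $f(A)X\bar f(B)-f(B)X\bar f(A)$ the bracketed expression should simplify to a fixed linear combination of $X$, $AXB^{\ast}$, $BXA^{\ast}$ (the cross terms $AX$, $XB^{\ast}$, etc., should cancel by antisymmetry), and similarly the symmetrized expression $f(A)X\bar f(B)+2X+f(B)X\bar f(A)$ should reduce to a combination of the same building blocks. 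I expect the identity
\[
\left(e^{i\alpha}-A\right)^{-1}M\left(e^{-i\beta}-B^{\ast}\right)^{-1},\qquad M=c_1 X+c_2\,AXB^{\ast}+c_3\,BXA^{\ast},
\]
to fall out with small integer constants after the $e^{i\alpha}\leftrightarrow e^{-i\beta}$ bookkeeping; this computation is the main obstacle, since one has to be careful that the ``$2X$'' in the second inequality is precisely what makes the mixed terms telescope.

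Having reduced to $\int\!\!\int (e^{i\alpha}-A)^{-1}\big(c_1X+c_2AXB^{\ast}+c_3BXA^{\ast}\big)(e^{-i\beta}-B^{\ast})^{-1}\,d\mu(\alpha)d\mu(\beta)$, I would move $w(\cdot)$ inside both integrals (since $w$ is a norm), and for each fixed $(\alpha,\beta)$ bound the integrand. Writing $R=(e^{i\alpha}-A)^{-1}$, $S=(e^{-i\beta}-B^{\ast})^{-1}$ we must estimate $w\big(R(c_1X+c_2AXB^{\ast}+c_3BXA^{\ast})S\big)$. The clean way is to realize $R(\cdot)S$ as a corner of $C\begin{bmatrix}0&\cdot\\ \cdot&0\end{bmatrix}C^{\ast}$-type products and invoke Lemma~\ref{kho}(c),(d),(e): first use subadditivity of $w$ to split off the three summands, then for each summand bound it by $\|R\|\,\|S\|$ times $w$ of an off-diagonal block built from $X$, $AXB^{\ast}$ or $BXA^{\ast}$ via parts (c)/(d), and finally collapse the $2\times 2$ block numerical radii of the antidiagonal matrices with entries $X,X$ or $AXB^{\ast},BXA^{\ast}$ using part (e) to get $w(X)$, $\tfrac12\big(w(AXB^{\ast}+BXA^{\ast})+w(AXB^{\ast}-BXA^{\ast})\big)$, etc.

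Finally I would invoke the $G_1$ hypothesis: since $\sigma(A),\sigma(B)\subset\mathbb D$, for every $\alpha,\beta$ we have $\|R\|=1/\mathrm{dist}(e^{i\alpha},\sigma(A))\le 1/d_A$ and $\|S\|\le 1/d_B$, so every $\|R\|\,\|S\|$ factor is at most $1/(d_Ad_B)$. Pulling these constants out and using $\int_0^{2\pi}d\mu(\alpha)=f(0)=1$ (and likewise in $\beta$) makes the double integral collapse to $1$, leaving exactly $\frac{2}{d_Ad_B}\big[2w(X)+w(AXB^{\ast}+BXA^{\ast})+w(AXB^{\ast}-BXA^{\ast})\big]$ in both cases. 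The reason the two inequalities have the \emph{same} right-hand side is that after the algebraic reduction the difference and the symmetrized sum are governed by the same three building blocks with the same coefficients; the only genuinely delicate point is verifying that cancellation precisely, so I would do that expansion explicitly and treat the rest as the routine ``norm in, estimate, constants out'' argument already used in Theorem~\ref{kit}.
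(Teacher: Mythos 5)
Your overall strategy (Herglotz representation, double integral, resolvent factoring, then Lemma \ref{kho}(c), (e), (f) and the $G_1$ bound $\Vert(e^{i\alpha}-A)^{-1}\Vert\le 1/d_A$) is the same as the paper's, and your endgame correctly anticipates where $w(X)$ and $\tfrac12\bigl(w(AXB^{\ast}+BXA^{\ast})+w(AXB^{\ast}-BXA^{\ast})\bigr)$ come from. But the step you yourself flag as ``the main obstacle'' is misstated in a way that cannot be repaired as written. The integrand does \emph{not} reduce to a single sandwiched term
$(e^{i\alpha}-A)^{-1}\bigl(c_1X+c_2AXB^{\ast}+c_3BXA^{\ast}\bigr)(e^{-i\beta}-B^{\ast})^{-1}$:
the two summands of $f(A)X\bar f(B)\mp f(B)X\bar f(A)$ carry resolvents in \emph{different} positions, $R_A(\cdot)R_B^{\ast}$ versus $R_B(\cdot)R_A^{\ast}$ with $R_A=(e^{i\alpha}-A)^{-1}$, $R_B=(e^{i\beta}-B)^{-1}$, and there is no way to move $BXA^{\ast}$ inside the first sandwich. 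The correct identity (obtained by inserting $\pm X$ between the two summands, which is the role your ``$2X$'' observation plays in the second inequality, and noting $X=R_A(e^{i\alpha}-A)X(e^{-i\beta}-B^{\ast})R_B^{\ast}$) is
\begin{align*}
2R_A\bigl(e^{i(\alpha-\beta)}X+AXB^{\ast}\bigr)R_B^{\ast}\;\mp\;2R_B\bigl(e^{-i(\alpha-\beta)}X+BXA^{\ast}\bigr)R_A^{\ast},
\end{align*}
a genuine two-term expression in which the cross terms cancel \emph{within each bracket}, not ``by antisymmetry'' between the two summands.

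This matters for the estimation, not just for bookkeeping. Lemma \ref{kho}(c) is tailored exactly to the shape $R_AMR_B^{\ast}\pm R_BNR_A^{\ast}$ and produces $2\Vert R_A\Vert\,\Vert R_B\Vert\,w\!\left(\left[\begin{smallmatrix}0&M\\N&0\end{smallmatrix}\right]\right)$ with $M=e^{i(\alpha-\beta)}X+AXB^{\ast}$ and $N=e^{-i(\alpha-\beta)}X+BXA^{\ast}$; only \emph{after} this does one split the block matrix by subadditivity into $\left[\begin{smallmatrix}0&e^{i(\alpha-\beta)}X\\ e^{-i(\alpha-\beta)}X&0\end{smallmatrix}\right]$ (handled by part (f), giving $w(X)$) plus $\left[\begin{smallmatrix}0&AXB^{\ast}\\ BXA^{\ast}&0\end{smallmatrix}\right]$ (handled by part (e)). Your proposal reverses this order: splitting the (incorrectly formed) single sandwich into three summands first destroys the pairing of $AXB^{\ast}$ with $BXA^{\ast}$ across the two summands, and bounding each piece separately would not yield $w(AXB^{\ast}\pm BXA^{\ast})$. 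So the proof is not complete as proposed: you need the displayed two-term identity, then part (c) applied to the pair, then parts (e) and (f); with that correction the rest of your argument (norm inside the integral, $G_1$ bounds, $\int d\mu=1$) goes through exactly as in the paper.
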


\begin{proof}
We have%
\begin{align*}
f(A)X\bar{f}(B)& -f(B)X\bar{f}(A) \\
& =\int_{0}^{2\pi }\int_{0}^{2\pi }\Big[\left( e^{i\alpha }-A\right)
^{-1}(e^{i\alpha }+A)X(e^{-i\beta }+B^{\ast })\left( e^{-i\beta }-B^{\ast
}\right) ^{-1} \\
& \,\,\,\,-\left( e^{i\beta }-B\right) ^{-1}(e^{i\beta }+B)X(e^{-i\alpha
}+A^{\ast })\left( e^{-i\alpha }-A^{\ast }\right) ^{-1}\Big]d\mu (\alpha
)d\mu (\beta ).
\end{align*}%
Using the equations%
\begin{align*}
& \left( e^{i\alpha }-A\right) ^{-1}(e^{i\alpha }+A)X(e^{-i\beta }+B^{\ast
})\left( e^{-i\beta }-B^{\ast }\right) ^{-1} \\
& \,\,\,\,-\left( e^{i\beta }-B\right) ^{-1}(e^{i\beta }+B)X(e^{-i\alpha
}+A^{\ast })\left( e^{-i\alpha }-A^{\ast }\right) ^{-1} \\
& =\left( e^{i\alpha }-A\right) ^{-1}(e^{i\alpha }+A)X(e^{-i\beta }+B^{\ast
})\left( e^{-i\beta }-B^{\ast }\right) ^{-1}+X \\
& \,\,\,\,-X-\left( e^{i\beta }-B\right) ^{-1}(e^{i\beta }+B)X(e^{-i\beta
}+A^{\ast })\left( e^{-i\alpha }-A^{\ast }\right) ^{-1} \\
& =\left( e^{i\alpha }-A\right) ^{-1}\left[ (e^{i\alpha }+A)X(e^{-i\beta
}+B^{\ast })+(e^{i\alpha }-A)X(e^{-i\beta }-B^{\ast })\right] \left(
e^{-i\beta }-B^{\ast }\right) ^{-1} \\
& \,\,-\left( e^{i\beta }-B\right) ^{-1}\left[ (e^{i\beta }-B)X(e^{-i\alpha
}-A^{\ast })+(e^{i\beta }+B)X(e^{-i\alpha }+A^{\ast })\right] \left(
e^{-i\alpha }-A^{\ast }\right) ^{-1} \\
& =2(e^{i\alpha }-A)^{-1}(e^{i\alpha }e^{-i\beta }X+AXB^{\ast })(e^{-i\beta
}-B^{\ast })^{-1} \\
& \,\,-2(e^{i\beta }-B)^{-1}(e^{-i\alpha }e^{i\beta }X+BXA^{\ast
})(e^{-i\alpha }-A^{\ast })^{-1},
\end{align*}%
we have%
\begin{align*}
w(f& (A)X\bar{f}(B)-f(B)X\bar{f}(A)) \\
& =2w\Big(\int_{0}^{2\pi }\int_{0}^{2\pi }(e^{i\alpha }-A)^{-1}(e^{i\alpha
}e^{-i\beta }X+AXB^{\ast })(e^{-i\beta }-B^{\ast })^{-1} \\
& \qquad -(e^{i\beta }-B)^{-1}(e^{-i\alpha }e^{i\beta }X+BXA^{\ast
})(e^{-i\alpha }-A^{\ast })^{-1}d\mu (\alpha )d\mu (\beta )\Big) \\
& \leq 2\int_{0}^{2\pi }\int_{0}^{2\pi }w\Big((e^{i\alpha
}-A)^{-1}(e^{i\alpha }e^{-i\beta }X+AXB^{\ast })(e^{-i\beta }-B^{\ast })^{-1}
\\
& \qquad -(e^{i\beta }-B)^{-1}(e^{-i\alpha }e^{i\beta }X+BXA^{\ast
})(e^{-i\alpha }-A^{\ast })^{-1}\Big)d\mu (\alpha )d\mu (\beta ) \\
& \qquad \qquad \qquad \qquad \qquad \qquad (\text{since}\,w(\,\cdot \,)\,%
\text{is a norm})
\end{align*}

\begin{align*}
& \leq 4\int_{0}^{2\pi }\int_{0}^{2\pi }\Vert (e^{i\alpha }-A)^{-1}\Vert
\Vert (e^{i\beta }-B)^{-1}\Vert \\
& \qquad \times w\left( \left[
\begin{array}{cc}
0 & e^{i\alpha }e^{-i\beta }X+AXB^{\ast } \\
e^{-i\alpha }e^{i\beta }X+BXA^{\ast } & 0%
\end{array}%
\right] \right) d\mu (\alpha )d\mu (\beta ) \\
& \qquad \qquad \qquad \qquad \qquad (\text{by Lemma \thinspace \ref{kho} (c)%
}) \\
& \leq {\frac{4}{d_{A}d_{B}}}\int_{0}^{2\pi }\int_{0}^{2\pi }\left[ w\left( %
\left[
\begin{array}{cc}
0 & e^{i\alpha }e^{-i\beta }X \\
e^{-i\alpha }e^{i\beta }X & 0%
\end{array}%
\right] \right) \right. \\
& \qquad \left. +w\left( \left[
\begin{array}{cc}
0 & AXB^{\ast } \\
BXA^{\ast } & 0%
\end{array}%
\right] \right) \right] d\mu (\alpha )d\mu (\beta ) \\
& ={\frac{4}{d_{A}d_{B}}}\int_{0}^{2\pi }\int_{0}^{2\pi }\left[ w\left( %
\left[
\begin{array}{cc}
0 & X \\
X & 0%
\end{array}%
\right] \right) +w\left( \left[
\begin{array}{cc}
0 & AXB^{\ast } \\
BXA^{\ast } & 0%
\end{array}%
\right] \right) \right] d\mu (\alpha )d\mu (\beta ) \\
& \leq {\frac{2}{d_{A}d_{B}}}\left[ 2w\left( X\right) +w\left( AXB^{\ast
}+BXA^{\ast }\right) +w\left( AXB^{\ast }-BXA^{\ast }\right) \right] \\
& \qquad \qquad \qquad \qquad \qquad (\text{by Lemma \thinspace \ref{kho}
(e) and (f)}).
\end{align*}%
This proves the first inequality.

Similarly, we have%
\begin{align*}
f(A)X\bar{f}(B)& +2X+f(B)X\bar{f}(A) \\
& =\int_{0}^{2\pi }\int_{0}^{2\pi }\Big[\left( e^{i\alpha }-A\right)
^{-1}(e^{i\alpha }+A)X(e^{-i\beta }+B^{\ast })\left( e^{-i\beta }-B^{\ast
}\right) ^{-1}+2X \\
& \,\,\,\,+\left( e^{i\beta }-B\right) ^{-1}(e^{i\beta }+B)X(e^{-i\alpha
}+A^{\ast })\left( e^{-i\alpha }-A^{\ast }\right) ^{-1}\Big]d\mu (\alpha
)d\mu (\beta ).
\end{align*}%
Using the equations%
\begin{align*}
& \left( e^{i\alpha }-A\right) ^{-1}(e^{i\alpha }+A)X(e^{-i\beta }+B^{\ast
})\left( e^{-i\beta }-B^{\ast }\right) ^{-1}+2X \\
& \,\,\,\,+\left( e^{i\beta }-B\right) ^{-1}(e^{i\beta }+B)X(e^{-i\alpha
}+A^{\ast })\left( e^{-i\alpha }-A^{\ast }\right) ^{-1} \\
& =\left( e^{i\alpha }-A\right) ^{-1}(e^{i\alpha }+A)X(e^{-i\beta }+B^{\ast
})\left( e^{-i\beta }-B^{\ast }\right) ^{-1}+X \\
& \,\,\,\,+X+\left( e^{i\beta }-B\right) ^{-1}(e^{i\beta }+B)X(e^{-i\beta
}+A^{\ast })\left( e^{-i\alpha }-A^{\ast }\right) ^{-1} \\
& =\left( e^{i\alpha }-A\right) ^{-1}\left[ (e^{i\alpha }+A)X(e^{-i\beta
}+B^{\ast })+(e^{i\alpha }-A)X(e^{-i\beta }-B^{\ast })\right] \left(
e^{-i\beta }-B^{\ast }\right) ^{-1} \\
& \,\,+\left( e^{i\beta }-B\right) ^{-1}\left[ (e^{i\beta }-B)X(e^{-i\alpha
}-A^{\ast })+(e^{i\beta }+B)X(e^{-i\alpha }+A^{\ast })\right] \left(
e^{-i\alpha }-A^{\ast }\right) ^{-1} \\
& =2(e^{i\alpha }-A)^{-1}(e^{i\alpha }e^{-i\beta }X+AXB^{\ast })(e^{-i\beta
}-B^{\ast })^{-1} \\
& \,\,+2(e^{i\beta }-B)^{-1}(e^{-i\alpha }e^{i\beta }X+BXA^{\ast
})(e^{-i\alpha }-A^{\ast })^{-1},
\end{align*}%
we have%
\begin{align*}
w(f& (A)X\bar{f}(B)+2X+f(B)X\bar{f}(A)) \\
& =2w\Big(\int_{0}^{2\pi }\int_{0}^{2\pi }(e^{i\alpha }-A)^{-1}(e^{i\alpha
}e^{-i\beta }X+AXB^{\ast })(e^{-i\beta }-B^{\ast })^{-1} \\
& \qquad +(e^{i\beta }-B)^{-1}(e^{-i\alpha }e^{i\beta }X+BXA^{\ast
})(e^{-i\alpha }-A^{\ast })^{-1}d\mu (\alpha )d\mu (\beta )\Big) \\
& \leq 2\int_{0}^{2\pi }\int_{0}^{2\pi }w\Big((e^{i\alpha
}-A)^{-1}(e^{i\alpha }e^{-i\beta }X+AXB^{\ast })(e^{-i\beta }-B^{\ast })^{-1}
\\
& \qquad +(e^{i\beta }-B)^{-1}(e^{-i\alpha }e^{i\beta }X+BXA^{\ast
})(e^{-i\alpha }-A^{\ast })^{-1}\Big)d\mu (\alpha )d\mu (\beta ) \\
& \qquad \qquad \qquad \qquad \qquad \qquad (\text{since}\,w(\,\cdot \,)\,%
\text{is a norm}) \\
& \leq 4\int_{0}^{2\pi }\int_{0}^{2\pi }\Vert (e^{i\alpha }-A)^{-1}\Vert
\Vert (e^{i\beta }-B)^{-1}\Vert \\
& \qquad \times w\left( \left[
\begin{array}{cc}
0 & e^{i\alpha }e^{-i\beta }X+AXB^{\ast } \\
e^{-i\alpha }e^{i\beta }X+BXA^{\ast } & 0%
\end{array}%
\right] \right) d\mu (\alpha )d\mu (\beta ) \\
& \qquad \qquad \qquad \qquad \qquad (\text{by Lemma \thinspace \ref{kho} (c)%
}) \\
& \leq {\frac{4}{d_{A}d_{B}}}\int_{0}^{2\pi }\int_{0}^{2\pi }\left[ w\left( %
\left[
\begin{array}{cc}
0 & e^{i\alpha }e^{-i\beta }X \\
e^{-i\alpha }e^{i\beta }X & 0%
\end{array}%
\right] \right) \right. \\
& \qquad \left. +w\left( \left[
\begin{array}{cc}
0 & AXB^{\ast } \\
BXA^{\ast } & 0%
\end{array}%
\right] \right) \right] d\mu (\alpha )d\mu (\beta ) \\
& ={\frac{4}{d_{A}d_{B}}}\int_{0}^{2\pi }\int_{0}^{2\pi }\left[ w\left( %
\left[
\begin{array}{cc}
0 & X \\
X & 0%
\end{array}%
\right] \right) +w\left( \left[
\begin{array}{cc}
0 & AXB^{\ast } \\
BXA^{\ast } & 0%
\end{array}%
\right] \right) \right] d\mu (\alpha )d\mu (\beta ) \\
& \leq {\frac{2}{d_{A}d_{B}}}\left[ 2w\left( X\right) +w\left( AXB^{\ast
}+BXA^{\ast }\right) +w\left( AXB^{\ast }-BXA^{\ast }\right) \right] \\
& \qquad \qquad \qquad \qquad \qquad (\text{by Lemma \thinspace \ref{kho}
(e) and (f)}).
\end{align*}%
This proves the second inequality and completes the proof of the theorem.
\end{proof}

\begin{remark}
Under the assumptions of Theorem \ref{++} and the hypothesis that $X$ is
self-adjoint, we have
\begin{align*}
\Vert f(A)X\bar{f}(B)& -f(B)X\bar{f}(A)\Vert \\
& \leq {\frac{4}{d_{A}d_{B}}}\,\max \{\Vert \,|X|\,\Vert +\Vert\, |AXB^{\ast
}|\,\Vert ,\Vert \,|X|\,\Vert +\Vert \,|BXA^{\ast }|\,\Vert \}
\end{align*}%
and
\begin{align*}
\Vert f(A)X\bar{f}(B)& +2X+f(B)X\bar{f}(A)\Vert \\
& \leq {\frac{4}{d_{A}d_{B}}}\,\max \{\Vert \,|X|\,\Vert +\Vert \,|AXB^{\ast
}|\,\Vert ,\Vert\, |X|\,\Vert +\Vert \,|BXA^{\ast }|\,\Vert \}.
\end{align*}%
To see this, first note that if $X$ is self-adjoint, then the operator
matrix
\begin{equation*}
T=\left[
\begin{array}{cc}
0 & e^{i\alpha }e^{-i\beta }X+AXB^{\ast } \\
e^{-i\alpha }e^{i\beta }X+BXA^{\ast } & 0%
\end{array}%
\right]
\end{equation*}%
is self-adjoint, whence $w(T)=\Vert T\Vert $. Moreover, $T=M+N$, where
\begin{equation*}
M=\left[
\begin{array}{cc}
0 & e^{i\alpha }e^{-i\beta }X \\
e^{-i\alpha }e^{i\beta }X & 0%
\end{array}%
\right] ,\qquad N=\left[
\begin{array}{cc}
0 & AXB^{\ast } \\
BXA^{\ast } & 0%
\end{array}%
\right]
\end{equation*}%
are self-adjoint operators. Using the fact that $\Vert C+D\Vert \leq \Vert\,
|C|+|D|\,\Vert $ for any normal operators $C$ and $D$ (see \cite{Bour}), we
have
\begin{equation*}
w(T)=\Vert M+N\Vert \leq \Vert \,|M|+|N|\,\Vert =\max \{\Vert \,|X|\,\Vert +\Vert
\,|AXB^{\ast }|\,\Vert ,\Vert \,|X|\,\Vert +\Vert \,|BXA^{\ast }|\,\Vert \}.
\end{equation*}%
Hence, we get the required inequalities by the same arguments as in the
proof of Theorem \ref{++}.
\end{remark}

If we take $X=I$ in Theorem \ref{++}, we get the following result.

\begin{corollary}
\label{im} Let $A, B\in{\mathbb{B}}({\mathscr H})$ be $G_1$ operators with $%
\sigma (A)\cup\sigma(B)\subset\mathbb{D}$ and $f\in\mathfrak{A}$. Then
\begin{align*}
\|\text{Im}(f(A)\bar{f}(B))\|\leq {\frac{2}{d_Ad_B}}\,\left(1+\|AB^*\|\right)
\end{align*}
and
\begin{align*}
\|\text{Re}(f(A)\bar{f}(B))+I\|\leq {\frac{2}{d_Ad_B}}\,\left(1+\|AB^*\|%
\right).
\end{align*}
\end{corollary}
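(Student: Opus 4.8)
The plan is to re-run the argument of Theorem~\ref{++} with $X=I$ rather than merely substituting $X=I$ into its stated conclusion: a direct substitution only yields $\frac{2}{d_Ad_B}\bigl(1+\|\mathrm{Re}(AB^\ast)\|+\|\mathrm{Im}(AB^\ast)\|\bigr)$, which can be strictly larger than $\frac{2}{d_Ad_B}(1+\|AB^\ast\|)$, so a slightly sharper use of the key block-matrix estimate is required. First I would record the algebraic identities that make $X=I$ special. Since $\bar f(A)=\bigl(f(A)\bigr)^\ast$ and $\bar f(B)=\bigl(f(B)\bigr)^\ast$, we have $f(B)\bar f(A)=\bigl(f(A)\bar f(B)\bigr)^\ast$, whence
\begin{equation*}
f(A)\bar f(B)-f(B)\bar f(A)=2i\,\mathrm{Im}\bigl(f(A)\bar f(B)\bigr),\qquad f(A)\bar f(B)+2I+f(B)\bar f(A)=2\bigl(\mathrm{Re}(f(A)\bar f(B))+I\bigr),
\end{equation*}
where $\mathrm{Re}(T)=\tfrac12(T+T^\ast)$ and $\mathrm{Im}(T)=\tfrac1{2i}(T-T^\ast)$. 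Both $\mathrm{Im}(f(A)\bar f(B))$ and $\mathrm{Re}(f(A)\bar f(B))+I$ are self-adjoint, so $w(\,\cdot\,)=\|\,\cdot\,\|$ on them, giving
\begin{equation*}
w\bigl(f(A)\bar f(B)-f(B)\bar f(A)\bigr)=2\,\|\mathrm{Im}(f(A)\bar f(B))\|,\qquad w\bigl(f(A)\bar f(B)+2I+f(B)\bar f(A)\bigr)=2\,\|\mathrm{Re}(f(A)\bar f(B))+I\|.
\end{equation*}

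Next I would follow the chain of estimates in the proof of Theorem~\ref{++} verbatim up to the line where Lemma~\ref{kho}(c) is applied, but with $X=I$. There the relevant $2\times2$ block becomes
\begin{equation*}
T_{\alpha,\beta}=\left[\begin{array}{cc}0 & e^{i(\alpha-\beta)}I+AB^\ast\\ e^{-i(\alpha-\beta)}I+BA^\ast & 0\end{array}\right],
\end{equation*}
which is self-adjoint because $\bigl(e^{i(\alpha-\beta)}I+AB^\ast\bigr)^\ast=e^{-i(\alpha-\beta)}I+BA^\ast$. Hence $w(T_{\alpha,\beta})=\|T_{\alpha,\beta}\|=\|e^{i(\alpha-\beta)}I+AB^\ast\|\le 1+\|AB^\ast\|$, using that the norm of an off-diagonal self-adjoint block $\left[\begin{smallmatrix}0 & C\\ C^\ast & 0\end{smallmatrix}\right]$ equals $\|C\|$ together with the triangle inequality. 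Feeding this into the chain, along with the $G_1$ bounds $\|(e^{i\alpha}-A)^{-1}\|\le 1/d_A$, $\|(e^{i\beta}-B)^{-1}\|\le 1/d_B$ and $\int_0^{2\pi}\!\!\int_0^{2\pi}d\mu(\alpha)\,d\mu(\beta)=f(0)^2=1$, the estimate terminates at
\begin{equation*}
w\bigl(f(A)\bar f(B)-f(B)\bar f(A)\bigr)\le \frac{4}{d_Ad_B}\,(1+\|AB^\ast\|),
\end{equation*}
and, using the second algebraic manipulation from the proof of Theorem~\ref{++} (which produces the same block $T_{\alpha,\beta}$ up to the sign between the two summands, harmless for Lemma~\ref{kho}(c)),
\begin{equation*}
w\bigl(f(A)\bar f(B)+2I+f(B)\bar f(A)\bigr)\le \frac{4}{d_Ad_B}\,(1+\|AB^\ast\|).
\end{equation*}
Dividing by $2$ and invoking the identities of the first paragraph then yields both inequalities of the corollary.

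There is no genuine obstacle, only a point of care: one cannot simply quote the stated form of Theorem~\ref{++} with $X=I$, since that route passes through Lemma~\ref{kho}(e),(f) and is too lossy. The gain comes exactly from observing that at $X=I$ the block $T_{\alpha,\beta}$ is self-adjoint, so its numerical radius coincides with its operator norm and can be bounded directly by $1+\|AB^\ast\|$; this is the only deviation from a literal copy of the proof of Theorem~\ref{++}.
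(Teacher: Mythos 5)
Your argument is correct, but it is genuinely not the paper's route: the paper obtains the corollary by literally substituting $X=I$ into the \emph{stated conclusion} of Theorem~\ref{++}, and, as you rightly observe, that substitution only yields the constant $1+\|\mathrm{Re}(AB^{\ast})\|+\|\mathrm{Im}(AB^{\ast})\|$ in place of $1+\|AB^{\ast}\|$ (since $w(AB^{\ast}+BA^{\ast})=2\|\mathrm{Re}(AB^{\ast})\|$ and $w(AB^{\ast}-BA^{\ast})=2\|\mathrm{Im}(AB^{\ast})\|$); taking, e.g., $A=r\,\mathrm{diag}(1,i)$ and $B=rI$ with $0<r<1$ shows the former can be strictly larger, so the corollary as stated does not follow from the theorem's conclusion alone. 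Your re-run of the proof of Theorem~\ref{++} with $X=I$, in which the lossy passage through Lemma~\ref{kho}(e),(f) is replaced by the observation that the block $T_{\alpha,\beta}$ is self-adjoint, so that $w(T_{\alpha,\beta})=\|e^{i(\alpha-\beta)}I+AB^{\ast}\|\le 1+\|AB^{\ast}\|$, is exactly what is needed to reach the claimed constant, and the surrounding ingredients all check out: $\bar f(A)=f(A)^{\ast}$, the reduction of the two numerical radii to $2\|\mathrm{Im}(f(A)\bar f(B))\|$ and $2\|\mathrm{Re}(f(A)\bar f(B))+I\|$, the applicability of Lemma~\ref{kho}(c) to both signs, the resolvent bounds $\|(e^{i\alpha}-A)^{-1}\|\le 1/d_{A}$, $\|(e^{i\beta}-B)^{-1}\|\le 1/d_{B}$, and the normalization $\int_{0}^{2\pi}\int_{0}^{2\pi}d\mu(\alpha)\,d\mu(\beta)=1$. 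In short, the paper's one-line justification buys brevity but actually proves only a weaker inequality, while your argument supplies the sharpening required to establish the corollary as stated.
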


\begin{remark}
If instead of applying Lemma \ref{kho} (c) we use Lemma \ref{kho} (d) and
(f) in the proof Theorem \ref{++}, we obtain the related inequalities%
\begin{equation*}
w(f(A)X\bar{f}(B)-f(B)X\bar{f}(A))\leq {\frac{4}{d_{A}d_{B}}}\,\left[ 1+\max
\{\Vert A\Vert ^{2},\Vert B\Vert ^{2}\}\right] w\left( X\right)
\end{equation*}%
and
\begin{equation*}
w(f(A)X\bar{f}(B)+2X+f(B)X\bar{f}(A))\leq {\frac{4}{d_{A}d_{B}}}\,\left[
1+\max \{\Vert A\Vert ^{2},\Vert B\Vert ^{2}\}\right] w\left( X\right) .
\end{equation*}
\end{remark}
\textbf{Acknowledgement.} The first author would like to thank the Tusi Mathematical Research Group (TMRG).
\bigskip

\end{document}